\documentclass[11pt,a4paper]{article}
\usepackage[margin=1in]{geometry}
\usepackage{amsmath,amssymb,amsthm}
\usepackage{xcolor} 
\usepackage[hidelinks]{hyperref}

\newtheorem{theorem}{Theorem}[section]
\newtheorem{proposition}[theorem]{Proposition}
\newtheorem{lemma}[theorem]{Lemma}
\newtheorem{corollary}[theorem]{Corollary}
\theoremstyle{remark}
\newtheorem{remark}[theorem]{Remark}
\theoremstyle{definition}

\newcommand{\Def}{\mathrm{Def}}
\newcommand{\Ric}{\mathrm{Ric}}

\newcommand{\R}{\mathbb{R}}
\newcommand{\HH}{\mathbb{H}}
\newcommand{\PP}{\mathbb{P}}

\title{Exponential stability for the three-dimensional\\
Navier-Stokes equations on negatively curved manifolds}
\author{{Zhi-Wei Wang}${}^{1,*}$ and Samuel L.\ Braunstein${}^{2,\dagger}$\\[6pt]
	\small ${}^{1}$College of Physics, Jilin University, Changchun 130012, China\\
	\small ${}^{2}$Computer Science, University of York, York YO10 5GH, UK\\[6pt]
	\small $^*$E-mail: {zhiweiwang.phy@gmail.com}
	\small $^\dagger$E-mail: {sam.braunstein@york.ac.uk}}
\date{\today}

\begin{document}
\maketitle

\begin{abstract}
We extend the exponential stability theorem for the three-dimensional
incompressible Navier-Stokes equations from hyperbolic 3-space
$\HH^3$ (established in a companion paper) to complete simply connected Riemannian
3-manifolds $(M^3, g)$ with pinched negative sectional curvature
$-b^2 \leq K \leq -a^2 < 0$ and bounded geometry (including a strictly positive injectivity radius). The deformation
Laplacian $\Delta_\Def = \Delta_B + \Ric$ remains the viscous
operator, selected by Lagrangian kinematics. We prove that the
{exact} system admits a unique global mild solution for
small $L^3$ data, with exponential decay at a rate determined by the
spectral gap of the Stokes operator. The extension overcomes three
obstacles absent on $\HH^3$: (i) the semigroup factorisation
$e^{t\Delta_\Def} = e^{-2t}e^{t\Delta_B}$ fails because $\Ric$ is
not a scalar multiple of the metric; (ii) the Leray projector no
longer commutes with $\Delta_\Def$; (iii) the exact spectral gap is
unknown. We resolve (i) unconditionally, without any curvature
restriction, by observing that the Ricci perturbation $V = \Ric +
2a^2 g$ is negative semi-definite and applying a Trotter product
bound with the diamagnetic inequality. We resolve (ii) by an
algebraic reduction of the commutator $[\PP, \Delta_\Def]$ to the
complementary projector $(I-\PP)$ applied to the shifted Ricci
endomorphism, giving a clean zeroth-order bound proportional to the
curvature variation $b^2 - a^2$. This is the sole source of a
curvature pinching constraint. We resolve (iii) via McKean's
theorem, the diamagnetic inequality, and the Weitzenb\"ock identity.
The Fujita-Kato temporal singularity exponent $1/2 - 3/(2p)$ is
unchanged from the $\HH^3$ case, confirming that the ultraviolet
scaling obstruction is local and geometry-independent, driven fundamentally by an unresolvable temporal scaling mismatch.
\end{abstract}

\section{Introduction}
\label{sec:intro}

In a companion paper~\cite{WB-JFA1}, we proved global exponential
stability for the three-dimensional incompressible Navier-Stokes
equations with small $L^3$ data on hyperbolic 3-space $\HH^3$, using
the deformation Laplacian $\Delta_\Def = \Delta_B + \Ric$ as the
viscous operator. The exponential decay rate was
$\mu\lambda_\Def^{(3)}$, where $\lambda_\Def^{(3)} = 26/9$ is the
$L^3$ spectral gap of the deformation Laplacian on $\HH^3$. The
proof exploited three special features of $\HH^3$: (i) the Ricci
tensor is a scalar ($\Ric = -2g$), giving a semigroup factorisation;
(ii) $\HH^3$ is a space form, so the Leray projector commutes with
$\Delta_\Def$; and (iii) the spectral gap is known exactly via
Donnelly's theorem.

The classical Fujita-Kato theory~\cite{Kato84} gives global
existence for small $L^3$ data on flat $\R^3$ with algebraic
decay. The companion paper~\cite{WB-JFA1} upgraded this to
exponential decay on $\HH^3$ using the spectral gap of the
deformation Laplacian, itself selected by Lagrangian
kinematics~\cite{WB2024}, building on the identification
by Ebin and Marsden~\cite{EM70} of the Lie derivative
$\mathcal{L}_u g$ as the deformation tensor for viscous fluids on
manifolds.

Chan and Czubak~\cite{CC10,CC13b} showed that the choice of viscous
operator has profound consequences for the regularity theory: with
the Hodge Laplacian on $\mathbb{H}^2$, Leray-Hopf weak solutions
are non-unique. Their work provides additional motivation for the
deformation Laplacian, whose spectral gap underpins the present
results. Khesin and
Misio{\l}ek~\cite{KM12} showed that this non-uniqueness is a
consequence of the Hodge decomposition specific to dimension
two and does not occur on $\HH^n$ for $n \geq 3$. In a
complementary direction, Lichtenfelz~\cite{Lic15} proved
non-uniqueness of Leray-Hopf solutions on a negatively curved
3-manifold (Anderson's counterexample to the Dodziuk-Singer
conjecture), but only for the Hodge Laplacian: the
construction relies on $L^2$ harmonic 1-forms being stationary
solutions of the Hodge-based equation, which fails for the
deformation Laplacian because $Lh = 2\Ric(h) \neq 0$.
Together, these results show that in three dimensions the
deformation Laplacian is better behaved than the Hodge
Laplacian with respect to uniqueness, providing further
motivation for the operator selection established
in~\cite{WB2024}.

The present paper extends this result to the natural class of
complete {simply connected} 3-manifolds with pinched negative sectional curvature
$-b^2 \leq K \leq -a^2 < 0$ and bounded geometry. All three special
features fail on a general manifold. We show that (i) is resolved
unconditionally (for all curvature pinching ratios $b/a$), that (ii)
introduces the sole curvature restriction, and that (iii) is
resolved by standard spectral-geometric bounds.

Fujita-Kato theory for the Navier-Stokes equations on $\HH^n$
was developed by Pierfelice~\cite{Pier17}, who proved
dispersive and smoothing estimates for Bochner-type Laplacians
on non-compact manifolds with negative Ricci curvature, and by
Balentine~\cite{Bal20}, who used these estimates to obtain
well-posedness, global existence for small $L^n$ data, and
exponential time decay on $\HH^n$. Both works use the
Ebin-Marsden deformation operator. The present paper and its
companion~\cite{WB-JFA1} contribute to this programme in three
respects: (a)~the explicit optimal decay rate
$\mu\lambda_\Def^{(3)}$ (the full spectral gap, not merely
half the gap or an unspecified exponential rate); (b)~the
extension from the constant-curvature setting of $\HH^3$ to
general pinched negatively curved 3-manifolds, where the
Leray-projector commutator and the loss of exact spectral data
introduce qualitatively new obstacles; and (c)~the ultraviolet
obstruction theorem (Theorem~\ref{thm:UV}), which establishes
a sharp geometric boundary between what curvature can and
cannot improve.

\begin{theorem}[Main theorem]\label{thm:main}
Let $(M^3, g)$ be a complete {simply connected} Riemannian 3-manifold with pinched
negative sectional curvature $-b^2 \leq K \leq -a^2 < 0$ and
bounded geometry ($|\nabla^k\mathrm{Rm}| \leq C_k$ for all
$k \geq 0${, and strictly positive injectivity radius $\inf_{x \in M} \mathrm{inj}(x) > 0$}). Let $A = -\PP\Delta_\Def$ be the Stokes operator with
the deformation Laplacian. Assume the curvature pinching condition
\begin{equation}\label{eq:pinching}
\frac{b^2}{a^2} < 1 + \frac{13}{9C_3},
\end{equation}
where $C_3 = \|I - \PP\|_{L^3 \to L^3}$ is the norm of the
complementary Helmholtz projector.

Then there exists $\epsilon_0 = \epsilon_0(a, b) > 0$ such that for
any divergence-free $u_0 \in L^3(M)$ with
$\|u_0\|_{L^3} < \epsilon_0$, the integral equation
\begin{equation}\label{eq:mild}
u(t) = e^{-t\mu A}u_0 - \int_0^t e^{-(t-s)\mu A}\PP\nabla\cdot
(u(s)\otimes u(s))\,ds
\end{equation}
has a unique global mild solution satisfying
\begin{equation}\label{eq:decay}
\|u(t)\|_{L^6} \leq C\,\|u_0\|_{L^3}\,t^{-1/4}\,
{e^{-\mu\lambda_A^{(3)}t}},
\end{equation}
where $\lambda_A^{(3)} \geq 26a^2/9 - 2C_3(b^2 - a^2) > 0$.
\end{theorem}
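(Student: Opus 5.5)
The strategy is a Fujita--Kato fixed point in time-weighted spaces with exponential weights, as on $\HH^3$. The new input is an $L^p\to L^q$ smoothing-plus-decay estimate for $e^{-t\mu A}$ on $M$. The three obstacles listed in the abstract are handled in this order.

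\emph{Step 1: semigroup bound for $e^{t\Delta_\Def}$.} Write $\Delta_\Def = (\Delta_B - 2a^2) + V$ with $V = \Ric + 2a^2 g$. In dimension three, $\Ric(e,e)$ is a sum of two sectional curvatures, so $\Ric \le -2a^2 g$ and $V \le 0$. By the Trotter product formula and Kato's diamagnetic inequality, $|e^{t\Delta_\Def}v| \le e^{-2a^2 t} e^{t\Delta}|v|$ pointwise, where $\Delta$ is the scalar Laplacian. Bounded geometry and positive injectivity radius give Gaussian heat-kernel upper bounds, and hence local $L^p\to L^q$ smoothing $t^{-\frac32(\frac1p-\frac1q)}$ for $t\le 1$. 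McKean's theorem gives $\lambda_0(\Delta)\ge a^2$, and interpolation with $L^1$/$L^\infty$ contractivity gives the $L^p$ gap $4a^2/p(1-1/p)$ for the scalar heat semigroup. For $p=3$ this is $8a^2/9$. Adding $2a^2$ gives $26a^2/9$, matching $\lambda_\Def^{(3)}$ on $\HH^3$.

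\emph{Step 2: the Stokes semigroup.} The Leray projector $\PP$ is bounded on $L^p$ under bounded geometry, which defines $C_3$. Next, compute $[\PP,\Delta_\Def]$. By the Weitzenb\"ock identity, $\Delta_B$ differs from the Hodge Laplacian by $\Ric$, and $-\Delta_H$ commutes with $\PP$. Hence $\Delta_\Def = -\Delta_H + 2\Ric = (-\Delta_H - 4a^2) + 2V'$, where $V' = \Ric + 2a^2 g$. Therefore
\[
A = -\PP\Delta_\Def = \PP(\Delta_H + 4a^2) - 2\PP V'\PP \quad\text{on divergence-free fields.}
\]
Equivalently, the defect relative to a commuting model is $(I-\PP)V'$, a zeroth-order term. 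Since $-2(b^2-a^2) \le V' \le 0$, it has $L^3$ norm at most $2C_3(b^2-a^2)$, because only the part orthogonal to the range of $\PP$ is unfavourable.

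Run Duhamel/perturbation for $e^{-tA}$ with the Step 1 semigroup as the unperturbed flow. A bounded perturbation of norm $\delta$ shifts the exponential rate by at most $\delta$. This gives $\|e^{-tA}\|_{L^3\to L^3}\lesssim e^{-\lambda t}$ with $\lambda \ge 26a^2/9 - 2C_3(b^2-a^2)$. This rate is positive exactly under \eqref{eq:pinching}, since $26/9 = 2\cdot 13/9$. Combining with the local smoothing via the semigroup property $e^{-tA}=e^{-A}e^{-(t-1)A}$ gives
\[
\|e^{-tA}f\|_{L^6}\le C t^{-1/4}e^{-\lambda t}\|f\|_{L^3}, \qquad
\|e^{-tA}\PP\nabla\cdot F\|_{L^6}\le C t^{-3/4}e^{-\lambda t}\|F\|_{L^3}.
\]
The second estimate follows by duality and Riesz-transform/gradient heat-kernel bounds, which hold under bounded geometry. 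I expect this step to be the main obstacle. The difficulty is keeping the perturbative loss at exactly the zeroth-order constant $2C_3(b^2-a^2)$ while retaining the gap, rather than losing a factor from smoothing.

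\emph{Step 3: fixed point.} Work in $X=\{u: \sup_t t^{1/4}e^{\lambda t}\|u(t)\|_{L^6}<\infty\}$ with the norm given by that supremum. The bilinear term satisfies $\|u\otimes u\|_{L^3}\le\|u\|_{L^6}^2$. The Step 2 estimate bounds it by
\[
\int_0^t (t-s)^{-3/4}e^{-\lambda(t-s)}s^{-1/2}e^{-2\lambda s}\,ds\;\|u\|_X^2 .
\]
This integral is $\lesssim t^{-1/4}e^{-\lambda t}$, since the extra factor $e^{-\lambda s}$ absorbs the large-time part. Banach's fixed point theorem for $\|u_0\|_{L^3}<\epsilon_0$ then gives existence, uniqueness in $X$, and \eqref{eq:decay}.
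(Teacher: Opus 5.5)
Your architecture is the paper's: Trotter--diamagnetic domination of $e^{t\Delta_\Def}$, the Leray defect treated as a bounded zeroth-order perturbation, then Fujita--Kato in an exponentially weighted space. The gap is in Step~2, which does not give the constant that \eqref{eq:pinching} is calibrated to. By your own decomposition, on divergence-free $\omega$ you have $A\omega=-\Delta_\Def\omega+2(I-\PP)V'\omega$. So the perturbation is $2(I-\PP)V'$, not $(I-\PP)V'$. Since $V'=\Ric+2a^2g$ has pointwise norm up to $2(b^2-a^2)$, the honest bound is $\|2(I-\PP)V'\|_{L^3\to L^3}\le 4C_3(b^2-a^2)$. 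That gives only $\lambda\ge 26a^2/9-4C_3(b^2-a^2)$, which is positive only when $b^2/a^2<1+13/(18C_3)$.

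Your remark that only the part orthogonal to the range of $\PP$ is unfavourable does not recover the factor in $L^3$. The favourable sign of $-\PP V'\PP$ on divergence-free fields is an $L^2$ self-adjointness fact, which the pointwise domination of Step~1 does not see. The missing idea is the midpoint shift of Lemma~\ref{lem:commutator}. Because $(I-\PP)\omega=0$, you may replace $V'$ by $\Ric+(a^2+b^2)g$, whose eigenvalues lie in $[-(b^2-a^2),\,b^2-a^2]$. This gives $\|2(I-\PP)(\Ric+(a^2+b^2)g)\|_{L^3\to L^3}\le 2C_3(b^2-a^2)$. Your claim that a perturbation of norm $\delta$ shifts the rate by at most $\delta$ is then valid: the domination gives $\|e^{t\Delta_\Def}\|_{L^3\to L^3}\le e^{-26a^2t/9}$ with constant one.

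Two further steps need repair.

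First, Riesz--Thorin interpolation between the $L^2$ gap and $L^\infty$ contractivity gives only the rate $2\lambda_0/p=2a^2/3$ at $p=3$, not $8a^2/9$. That yields $\lambda_\Def^{(3)}\ge 24a^2/9$ and again a smaller pinching range than \eqref{eq:pinching}. The factor $4(p-1)/p^2$ comes from the $L^p$ energy identity instead. For $u=e^{t\Delta}f\ge0$ one has $\frac{d}{dt}\|u\|_p^p=-\frac{4(p-1)}{p}\|\nabla u^{p/2}\|_2^2\le-\frac{4(p-1)}{p}a^2\|u\|_p^p$, using McKean's bound applied to $u^{p/2}$.

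Second, the factorisation $e^{-tA}=e^{-A}e^{-(t-1)A}$ gives only $C\min\{t,1\}^{-1/4}e^{-\lambda t}$ and $C\min\{t,1\}^{-3/4}e^{-\lambda t}$ for large $t$, not $t^{-1/4}e^{-\lambda t}$ and $t^{-3/4}e^{-\lambda t}$. With your weight $t^{1/4}e^{\lambda t}$ you therefore cannot conclude that the free evolution lies in $X$. Use the weight $\min\{t,1\}^{1/4}e^{\lambda t}$; your absorption argument in Step~3 still closes. This yields \eqref{eq:decay} with $t^{-1/4}$ replaced by $\min\{t,1\}^{-1/4}$. The literal large-time factor needs a global $L^3\to L^6$ bound, which the paper asserts in Propositions~\ref{prop:semigroup} and~\ref{prop:stokes} rather than derives.

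Once these are fixed, getting the bilinear decay at the same rate $\lambda$ from the semigroup property is cleaner than the paper's route. The paper factors through the shifted Bochner Laplacian and must then check $\gamma'\ge\lambda_A^{(3)}$.
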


On $\HH^3$ ($a = b = 1$), the pinching condition is vacuous
($b^2/a^2 = 1 < 1 + 13/(9C_3)$), the commutator vanishes, and
$\lambda_A^{(3)} = 26/9$, recovering the companion paper's result.

The UV obstruction theorem from~\cite{WB-JFA1} extends without
modification:

\begin{theorem}[UV obstruction]\label{thm:UV}
On any complete 3-manifold with $-b^2 \leq K \leq -a^2 < 0$, the
Fujita-Kato contraction for $L^p$ data with $p < 3$ fails: {due to an unresolvable temporal scaling mismatch, the relative magnitude of the nonlinear term to the linear term blows up} as $t \to 0$ with exponent
$1/2 - 3/(2p) < 0$, regardless of the spectral gap. In particular,
$L^2$ data remains supercritical.
\end{theorem}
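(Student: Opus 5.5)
The plan is to reproduce the scaling argument of the companion paper~\cite{WB-JFA1} and observe that every ingredient is local in space and short in time, so curvature never enters at leading order.

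\textbf{Step 1: linear bound.} Take divergence-free $u_0\in L^p(M)$ with $p<3$. I will use the smoothing estimate for the semigroup,
\[
\|e^{-t\mu A}u_0\|_{L^q}\lesssim t^{-\frac32(\frac1p-\frac1q)}e^{-ct}\|u_0\|_{L^p}.
\]
On $\HH^3$ this came from the factorisation. Here it should follow from the Trotter--diamagnetic domination of $e^{t\Delta_\Def}$ by the scalar heat kernel, together with the Leray commutator bound. For $0<t\le 1$, bounded geometry gives Euclidean-type heat kernel bounds $p_t(x,y)\lesssim t^{-3/2}e^{-d^2/Ct}$. So the exponent of $t$ is exactly the Euclidean one, and the spectral gap only enters through the factor $e^{-ct}$, which is $\sim 1$ as $t\to0$.

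\textbf{Step 2: nonlinear bound and ratio.} The bilinear Duhamel term is $B(u,u)(t)=\int_0^t e^{-(t-s)\mu A}\PP\nabla\cdot(u\otimes u)\,ds$. I will bound it in the natural Kato norm $\sup_t t^{\alpha}\|u(t)\|_{L^q}$ with $\alpha=\frac32(\frac1p-\frac1q)$. For this I use the gradient smoothing estimate $\|e^{-\tau\mu A}\PP\nabla\cdot F\|_{L^q}\lesssim \tau^{-\frac12-\frac32(\frac2q-\frac1q)}\|F\|_{L^{q/2}}$, again local for small $\tau$. The Beta-integral in $s$ then gives
\[
\|B(u,u)(t)\|_{L^q}\lesssim t^{\frac12-\frac3{2q}-2\alpha}\|u\|_X^2,
\]
whereas the linear term scales like $t^{-\alpha}$. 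The nonlinear-to-linear ratio is therefore $t^{\frac12-\frac3{2q}-\alpha}=t^{\frac12-\frac3{2p}}$, which is independent of $q$. Since $p<3$ the exponent is negative, so the ratio blows up as $t\to0$.

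\textbf{Step 3: sharpness.} A single-sided bound does not yet show failure of the contraction, so I need to show the estimate cannot be improved. I will localise: take $u_0^\lambda$ supported in a normal-coordinate ball of radius $\lambda\to0$, namely a rescaled Euclidean divergence-free profile corrected by the Leray projection. In such a ball the metric is $\delta+O(\lambda^2)$ and $\Ric=O(1)$. Over times $t\sim\lambda^2$ the operator $\mu A$ acts like the Euclidean Stokes operator up to relative errors $O(\lambda^2)$; this relies on bounded geometry and a positive injectivity radius. The Euclidean lower bound for the first Picard iterate, with exponent $\frac12-\frac3{2p}$, then transfers to $M$. Any smallness of $\|u_0\|_{L^p}$ is destroyed by the factor $\lambda^{3/p-1}$, so no $\epsilon_0$ works, and the spectral gap cannot rescue the estimate because $e^{-c\lambda^2}\to1$. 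Taking $p=2$ gives exponent $-\frac14$, which shows that $L^2$ is supercritical.

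\textbf{Main obstacle.} I expect the hardest step to be the transplantation in Step 3. One must control the nonlocal Leray projector $\PP$ and the commutator $[\PP,\Delta_\Def]$ uniformly at small scales. I would first try to show that $\PP$ differs from the Euclidean projector by a smoothing operator of order $-2$ in local charts, which is available from bounded geometry. Its contribution is then $O(\lambda^2)$ relative to the leading term, and the zeroth-order Ricci and commutator terms add only $O(t)$ corrections.
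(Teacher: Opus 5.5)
Your Steps 1 and 2 are the paper's proof. The paper works in the norm $\sup_{t>0}t^{\gamma}\|u(t)\|_{L^q}$ with $\gamma=\frac32(\frac1p-\frac1q)$ and uses the $(t-s)^{-\frac12-\frac3{2q}}$ smoothing of $e^{-(t-s)\mu A}\PP\nabla\cdot$ from $L^{q/2}$ to $L^q$. It then evaluates the Beta integral and reads off the relative exponent $\frac12-\frac3{2p}$; the spectral gap enters only through an exponential factor that is $O(1)$ as $t\to0$.

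The paper stops there and has no counterpart to your Step 3, so that is where the routes differ. The paper's version is short and uses only estimates already needed for Theorem~\ref{thm:main}. But, as you correctly point out, an upper bound with a negative exponent does not show that $\|B(u,v)\|_X\le C\|u\|_X\|v\|_X$ is false. It only shows that this particular chain of inequalities cannot close.

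Your concentration argument does more. Take data of fixed $L^p$ norm $\epsilon$ concentrated at scale $\lambda$; the first Picard iterate at time $\lambda^2t_0$ then has size $\epsilon\lambda^{1-3/p}$ relative to the linear term. Hence the best bilinear constant is infinite, even on a finite interval $(0,T)$. The Picard map therefore fails to preserve the Fujita--Kato ball for data of arbitrarily small $L^p$ norm, which is the literal content of ``the contraction fails''. Your argument also covers every $p<3$. By contrast, the Beta integral in Step 2 (and in the paper) needs $3<q<3p/(3-p)$, and that range is empty for $p\le 3/2$. The cost is the transplantation lemma, which the paper never needs.

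One correction to your plan for that lemma. In normal coordinates, $\PP_M-\PP_{\R^3}$ is not smoothing of order $-2$. It is of order $0$, because the principal symbols differ by $O(|x|^2)$, and it also has order $-1$ terms with $O(|x|)$ coefficients. The $O(\lambda^2)$ gain comes instead from the rescaling $x=\lambda y$, under which the metric becomes $\delta+O(\lambda^2|y|^2)$. That gain therefore holds only on bounded $y$-sets, and the far field has to be controlled by the off-diagonal decay of the kernel of $e^{-\tau\mu A}\PP\nabla\cdot$.

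This is manageable, because you only need a lower bound on $\|B(v,v)(\lambda^2t_0)\|_{L^q(B(x_0,R\lambda))}$ at a single time. The matching upper bound on $\|v\|_X$ already comes from Step 1.
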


\section{The deformation semigroup: unconditional bounds}
\label{sec:semigroup}

On a 3-manifold with $-b^2 \leq K \leq -a^2 < 0$, the Ricci
eigenvalues lie uniformly in $[-2b^2, -2a^2]$. Write
\begin{equation}\label{eq:decomp}
\Delta_\Def = \Delta_B + \Ric = \Delta_B - 2a^2 + V,\qquad
V = \Ric + 2a^2\,g.
\end{equation}
The endomorphism $V$ has eigenvalues in $[-2(b^2 - a^2), 0]$: it is
negative semi-definite. This sign is the key structural fact.

\begin{proposition}\label{prop:semigroup}
On a complete 3-manifold with $-b^2 \leq K \leq -a^2 < 0$:
\begin{equation}\label{eq:semigroup-bound}
\|e^{t\Delta_\Def}f\|_{L^q} \leq C\,
t^{-\frac{3}{2}(\frac{1}{p}-\frac{1}{q})}\,
e^{-\lambda_\Def^{(p)}(a)\,t}\,\|f\|_{L^p},
\end{equation}
with $\lambda_\Def^{(p)}(a) \geq \lambda_0^{(p)} + 2a^2$, where
$\lambda_0^{(p)}$ is the scalar $L^p$ spectral bottom. No curvature
pinching condition is required.
\end{proposition}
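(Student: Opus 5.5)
The plan is to reduce the vector-valued semigroup $e^{t\Delta_\Def}$ to the scalar heat semigroup $e^{t\Delta}$ on functions, using the decomposition \eqref{eq:decomp}. There are three steps: remove the constant shift, control the perturbation $V$ by a Trotter product formula, and pass from the vector Bochner semigroup to the scalar one by domination.

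First, since $-2a^2$ is a scalar constant it commutes with everything, and
\[
e^{t\Delta_\Def} = e^{-2a^2 t}\, e^{t(\Delta_B + V)}.
\]
It therefore suffices to bound $e^{t(\Delta_B+V)}$ by the scalar heat semigroup with rate $\lambda_0^{(p)}$.

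Second, $V$ is a bounded, smooth, self-adjoint endomorphism field with $-2(b^2-a^2)\le V\le 0$ pointwise. Hence $e^{sV}$ acts pointwise as a symmetric matrix with eigenvalues in $(0,1]$, so $|e^{sV}f|(x)\le |f|(x)$ for every $x$. Both $\Delta_B$ and $V$ generate contraction semigroups on $L^2$, and $V$ is bounded, so the Trotter product formula applies:
\[
e^{t(\Delta_B+V)}f=\lim_{n\to\infty}\bigl(e^{(t/n)\Delta_B}e^{(t/n)V}\bigr)^n f
\]
in $L^2$. By Kato's diamagnetic (semigroup domination) inequality for the Bochner Laplacian, $|e^{s\Delta_B}\omega|\le e^{s\Delta}|\omega|$ pointwise. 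Iterating this together with the pointwise contraction by $e^{sV}$, and using positivity of $e^{s\Delta}$, gives
\[
|(e^{(t/n)\Delta_B}e^{(t/n)V})^n f|\le (e^{(t/n)\Delta})^n|f| = e^{t\Delta}|f|.
\]
Passing to an a.e.-convergent subsequence yields $|e^{t(\Delta_B+V)}f|\le e^{t\Delta}|f|$ a.e. for $f\in L^2\cap L^p$. A density argument, using the domination itself to define the extension to $L^p$, then transfers every scalar $L^p\to L^q$ bound to the vector semigroup. No size condition on $b^2-a^2$ enters; only the sign of $V$ is used. This is why no pinching condition is needed.

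Third, I need the scalar estimate
\[
\|e^{t\Delta}\phi\|_{L^q}\le C t^{-\frac32(\frac1p-\frac1q)}e^{-\lambda_0^{(p)}t}\|\phi\|_{L^p}.
\]
For small times, say $t\le1$, this is the standard ultracontractive heat kernel bound under bounded geometry and positive injectivity radius, which give uniform local Sobolev inequalities. For $t\ge1$, I write $e^{t\Delta}=e^{\frac12\Delta}e^{(t-1)\Delta}e^{\frac12\Delta}$, apply the $L^p$ spectral-bottom decay $\|e^{s\Delta}\|_{L^p\to L^p}\le Ce^{-\lambda_0^{(p)}s}$, and use the unit-time $L^p\to L^q$ smoothing. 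The polynomial factor is harmless because $t^{-\alpha}\ge c\,t^{-\alpha}$ can be absorbed for $t\ge1$ by adjusting constants. Combined with the prefactor $e^{-2a^2t}$, this gives \eqref{eq:semigroup-bound} with $\lambda_\Def^{(p)}(a)\ge\lambda_0^{(p)}+2a^2$.

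The main obstacle is the $L^p$ (rather than $L^2$) exponential decay with exactly the rate $\lambda_0^{(p)}$. On non-compact manifolds the $L^p$ spectrum differs from the $L^2$ spectrum, and uniform-in-time constants need care. Here I would take $\lambda_0^{(p)}$ to be defined as the exponential decay rate of the scalar semigroup on $L^p$, which is $p$-independent of the vector structure. I would justify its positivity via McKean's bound $\lambda_0^{(2)}\ge a^2$ together with Riesz--Thorin interpolation against the $L^1$/$L^\infty$ contractivity, giving $\lambda_0^{(p)}\ge 4a^2(p-1)/p^2$-type bounds. This last point is where extra care, or a slight loss of rate, may be needed.
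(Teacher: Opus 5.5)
Your reduction matches the paper's argument step for step. You factor out $e^{-2a^2t}$, apply the Trotter product formula, and use $V\le 0$ for the pointwise contraction $|e^{sV}f|\le|f|$. You then use Hess--Schrader--Uhlenbrock domination and positivity of the scalar semigroup to iterate, and pass to the limit to get $|e^{t(\Delta_B+V)}f|\le e^{t\Delta}|f|$. Your added care there is sound: the a.e.\ subsequence, the density argument in $L^p$, and importing bounded geometry with positive injectivity radius for the small-time ultracontractive bound (a hypothesis the proposition as stated omits).

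\textbf{The gap: the large-time scalar estimate.} This is the one step you try to derive yourself rather than cite. For $t\ge 1$ your splitting gives
\[
\|e^{t\Delta}\|_{L^p\to L^q}\le \|e^{\frac12\Delta}\|_{L^p\to L^q}\,\|e^{(t-1)\Delta}\|_{L^p\to L^p}\,\|e^{\frac12\Delta}\|_{L^p\to L^p}\le C'e^{-\lambda_0^{(p)}t},
\]
with no polynomial decay. Since $\alpha=\frac32(\frac1p-\frac1q)>0$, the target $Ct^{-\alpha}e^{-\lambda_0^{(p)}t}$ is \emph{stronger} than this for large $t$. No choice of constants gives $C'e^{-\lambda t}\le C''t^{-\alpha}e^{-\lambda t}$ for all $t\ge 1$, because that would require $t^{\alpha}\le C''/C'$. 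So the absorption sentence is false.

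What your argument actually proves is one of two weaker statements: the bound with prefactor $C\max(t^{-\alpha},1)$, or the bound with the exact $t^{-\alpha}$ but rate $\lambda_0^{(p)}+2a^2-\epsilon$ for every $\epsilon>0$. This is not cosmetic. The space $X$ in Section~5 uses the weight $e^{\alpha t}t^{1/4}$ with the full rate $\alpha$, and your large-time bound would only control the weighted linear term by $Ct^{1/4}$, which is unbounded.

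The paper does not derive this step either; it quotes the scalar $L^p$--$L^q$ bound in exactly the stated form from Davies. To reach the statement as written, you must import a scalar estimate with genuine large-time decay of the $L^p\to L^q$ norm. The semigroup splitting cannot produce that decay.

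\textbf{A smaller point: the rate from Riesz--Thorin.} Interpolating $\|e^{t\Delta}\|_{L^2\to L^2}=e^{-\lambda_0^{(2)}t}$ against $L^\infty$-contractivity gives only $\frac2p\lambda_0^{(2)}$ for $p\ge 2$. At $p=3$ that is $\frac23a^2$, not $\frac{4(p-1)}{p^2}\lambda_0^{(2)}=\frac89a^2$. The sharp rate, with constant $1$ and no loss, comes from the $L^p$ energy identity for $u=e^{t\Delta}|f|\ge 0$:
\[
\frac{d}{dt}\|u\|_{L^p}^p=-\frac{4(p-1)}{p}\bigl\|\nabla u^{p/2}\bigr\|_{L^2}^2\le-\frac{4(p-1)}{p}\lambda_0^{(2)}\|u\|_{L^p}^p .
\]
This does not affect the proposition itself, which is phrased in terms of $\lambda_0^{(p)}$. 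It is, however, what the later value $26a^2/9$ relies on. Note also that the McKean bound you invoke for positivity requires simple connectivity, which this proposition does not assume.
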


\begin{proof}
The factorisation~\eqref{eq:decomp} gives
$e^{t\Delta_\Def} = e^{-2a^2 t}e^{t(\Delta_B + V)}$. Since
$\Delta_B$ and $V$ do not commute, we use the Trotter product
formula:
\begin{equation}
e^{t(\Delta_B + V)}f = \lim_{n\to\infty}
\bigl(e^{(t/n)\Delta_B}e^{(t/n)V}\bigr)^n f.
\end{equation}
Because $V \leq 0$, the matrix exponential $e^{\tau V}$ has
eigenvalues bounded by $1$, giving the pointwise contraction
$|e^{\tau V}f| \leq |f|$. The Hess-Schrader-Uhlenbrock diamagnetic
inequality~\cite{HSU1977} gives
$|e^{\tau\Delta_B}f| \leq e^{\tau\Delta_\mathrm{scalar}}|f|$.
Composing:
\begin{equation}
\bigl|e^{(t/n)\Delta_B}e^{(t/n)V}f\bigr| \leq
e^{(t/n)\Delta_\mathrm{scalar}}|f|.
\end{equation}
Iterating $n$ times and taking $n \to \infty$:
$|e^{t(\Delta_B + V)}f| \leq e^{t\Delta_\mathrm{scalar}}|f|$.
Taking $L^q$ norms and using the scalar $L^p$-$L^q$
bounds~\cite{DaviesBook}:
$\|e^{t(\Delta_B+V)}f\|_{L^q} \leq C\,
t^{-3(1/p-1/q)/2}e^{-\lambda_0^{(p)}t}\|f\|_{L^p}$.
Reintroducing $e^{-2a^2 t}$ gives~\eqref{eq:semigroup-bound}.
\end{proof}

\begin{remark}
The negative semi-definiteness of $V$ is the structural reason why
no curvature pinching is needed: more negative curvature enhances
dissipation. This is in contrast to a naive Duhamel bound, which
treats $\|V\|$ as a worst-case perturbation and would introduce a
spurious restriction on $b/a$.
\end{remark}

\section{The Leray-projector commutator}
\label{sec:leray}

On $\HH^3$ (a space form), the Hodge Laplacian $\Delta_H$ commutes
with $d$, $\delta$, and hence with
$\PP = I - d(-\Delta_\mathrm{scalar})^{-1}\delta$. Since
$-\Delta_\Def = \Delta_H - 2\Ric$ on divergence-free fields
(Weitzenb\"ock), this gives $[\PP, \Delta_\Def] = 0$ on $\HH^3$.

On a general manifold, $[\Delta_H, \PP] = 0$ still holds (universally),
but $\Ric$ does not commute with $\PP$:
\begin{equation}\label{eq:commutator}
[\PP, \Delta_\Def] = 2[\PP, \Ric].
\end{equation}

\begin{lemma}\label{lem:commutator}
For any divergence-free 1-form $\omega \in L^p$ ($1 < p < \infty$)
on a complete 3-manifold with bounded geometry and
$-b^2 \leq K \leq -a^2 < 0$:
\begin{equation}\label{eq:comm-bound}
\|[\PP, \Delta_\Def]\omega\|_{L^p} \leq
2C_p(b^2 - a^2)\|\omega\|_{L^p},
\end{equation}
where $C_p = \|I - \PP\|_{L^p \to L^p}$.
\end{lemma}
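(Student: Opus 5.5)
We start from the identity \eqref{eq:commutator}, $[\PP,\Delta_\Def]=2[\PP,\Ric]$, which we take from the preceding discussion. The commutator involves only the zeroth-order endomorphism $\Ric$ and no derivatives, so the target is a bounded zeroth-order estimate. The first step is to replace $\Ric$ by the shifted endomorphism $V=\Ric+2a^2g$ of \eqref{eq:decomp}. Since $2a^2 g$ is a scalar multiple of the identity, it commutes with $\PP$, and therefore
\begin{equation*}
[\PP,\Ric]=[\PP,V].
\end{equation*}
This shift is the only place where the curvature variation $b^2-a^2$ enters. Pointwise, $V$ has eigenvalues in $[-2(b^2-a^2),0]$. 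Hence $V$ acts as multiplication by a symmetric endomorphism with $|V\omega|\le 2(b^2-a^2)|\omega|$ pointwise, and consequently $\|V\|_{L^p\to L^p}\le 2(b^2-a^2)$ for every $p$.

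The second step uses the hypothesis that $\omega$ is divergence-free, so $\PP\omega=\omega$. Then
\begin{equation*}
[\PP,V]\omega=\PP V\omega-V\PP\omega=\PP V\omega-V\omega=-(I-\PP)V\omega .
\end{equation*}
This is the algebraic reduction to the complementary projector applied to the shifted Ricci endomorphism. It avoids the cruder bound $\|\PP V\|+\|V\PP\|$, which would require $\|\PP\|_{L^p}$ and would give a worse constant. Taking norms gives
\begin{equation*}
\|[\PP,\Delta_\Def]\omega\|_{L^p}=2\|(I-\PP)V\omega\|_{L^p}\le 2C_p\|V\omega\|_{L^p}\le 2C_p\cdot 2(b^2-a^2)\|\omega\|_{L^p}.
\end{equation*}

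Here a factor of $2$ must be checked, because this chain yields $4C_p(b^2-a^2)$ rather than the stated $2C_p(b^2-a^2)$. The first thing I would try is to go back to the derivation of \eqref{eq:commutator} from the Weitzenb\"ock identity. Since $\Delta_\Def=\Delta_B+\Ric$ and $\Delta_B=-\Delta_H+\Ric$ on 1-forms, we get $\Delta_\Def=-\Delta_H+2\Ric$. This suggests the factor $2$ in \eqref{eq:commutator} is genuine, and the bound $\|V\|\le 2(b^2-a^2)$ is also sharp. If that check confirms both factors, then the honest constant from this argument is $4C_p(b^2-a^2)$, and I would state that. If instead one normalises by the sectional-curvature spread $b^2-a^2$ per Ricci eigenvalue direction, more care is needed. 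The natural attempt at a finer bound is to center $V$ at the midpoint, using $\Ric+(a^2+b^2)g$, whose eigenvalues lie in $[-(b^2-a^2),(b^2-a^2)]$. The same argument then gives $\|[\PP,\Ric]\omega\|_{L^p}\le C_p(b^2-a^2)\|\omega\|_{L^p}$, hence exactly $2C_p(b^2-a^2)\|\omega\|_{L^p}$ for $[\PP,\Delta_\Def]$. I would adopt this centered shift as the proof, since any scalar shift commutes with $\PP$.

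The main obstacle is not this algebra but the finiteness of $C_p=\|I-\PP\|_{L^p\to L^p}$ for $1<p<\infty$, together with the validity of \eqref{eq:commutator}, meaning $[\Delta_H,\PP]=0$ on $L^p$. For finiteness of $C_p$, I would invoke $L^p$-boundedness of the Riesz transform $d(-\Delta_{\mathrm{scalar}})^{-1/2}$ on manifolds with bounded geometry and a positive spectral gap, which holds here by McKean's bound $\lambda_0\ge a^2$. This boundedness makes $I-\PP=d(-\Delta_{\mathrm{scalar}})^{-1}\delta$ bounded on $L^p$. The commutation $[\Delta_H,\PP]=0$ follows from $d\Delta_H=\Delta_H d$, from $\delta\Delta_H=\Delta_H\delta$, and from functional calculus, first on a dense core and then extended by density.
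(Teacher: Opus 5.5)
Your final argument is correct and is essentially the paper's proof. You use $\PP\omega=\omega$ to write $[\PP,\Ric]\omega=-(I-\PP)\bigl((\Ric+(a^2+b^2)g)\omega\bigr)$, bound the midpoint-centred endomorphism pointwise by $b^2-a^2$, and multiply by $2$. The paper adds $(I-\PP)(c\,\omega)=0$ after the reduction rather than shifting inside the commutator, which amounts to the same step. The preliminary shift by $2a^2g$ only gives $4C_p(b^2-a^2)$ and should be dropped from the write-up.
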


\begin{proof}
Since $\PP\omega = \omega$ for divergence-free $\omega$:
\begin{equation}
[\PP, \Ric]\omega = \PP(\Ric\,\omega) - \Ric\,\omega
= -(I - \PP)(\Ric\,\omega).
\end{equation}
The complementary projector $(I - \PP)$ annihilates any multiple of
the divergence-free field $\omega$. {Adding the constant $c = a^2 + b^2$, we have $(I - \PP)(c\,\omega) = 0$. Therefore:
\begin{equation}
[\PP, \Ric]\omega = -(I - \PP)(\Ric\,\omega) - (I - \PP)(c\,\omega)
= -(I - \PP)\bigl((\Ric + (a^2 + b^2)I)\omega\bigr).
\end{equation}}
The eigenvalues of $\Ric$ lie in $[-2b^2, -2a^2]$, so the shifted
endomorphism $\Ric + (a^2 + b^2)I$ has eigenvalues in
$[-(b^2 - a^2), b^2 - a^2]$ and pointwise operator norm
$b^2 - a^2$. Therefore:
$\|[\PP, \Ric]\omega\|_{L^p} \leq C_p(b^2 - a^2)\|\omega\|_{L^p}$.
Multiplying by 2 gives~\eqref{eq:comm-bound}.
\end{proof}

\begin{remark}
The spectral midpoint shift halves the effective perturbation norm
(from $2b^2$ to $b^2 - a^2$). On an Einstein manifold ($b = a$), the
commutator vanishes identically, recovering the space-form result.
\end{remark}

The Stokes operator is $A = -\Delta_\Def - 2[\PP, \Ric]$ on
divergence-free fields. Since $2[\PP, \Ric]$ is bounded on $L^p$
(Lemma~\ref{lem:commutator}), the Stokes semigroup inherits the
bounds of Proposition~\ref{prop:semigroup} with a shifted spectral
gap:

\begin{proposition}\label{prop:stokes}
Under the pinching condition~\eqref{eq:pinching}:
\begin{equation}
\|e^{-t\mu A}f\|_{L^q} \leq C'\,
t^{-\frac{3}{2}(\frac{1}{p}-\frac{1}{q})}\,
e^{-\mu\lambda_A^{(p)}t}\,\|f\|_{L^p},
\end{equation}
where $\lambda_A^{(p)} \geq \lambda_\Def^{(p)}(a) - 2C_p(b^2 - a^2)
> 0$.
\end{proposition}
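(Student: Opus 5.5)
The plan is to treat the Stokes operator on divergence-free fields as a bounded perturbation of the deformation Laplacian. We write
\[
-\mu A = \mu\Delta_\Def + 2\mu[\PP,\Ric] =: \mu\Delta_\Def + \mu B .
\]
By Lemma~\ref{lem:commutator}, $B$ is bounded on the divergence-free subspace of $L^p$, with
\[
\|B\|_{L^p\to L^p} \le 2C_p(b^2-a^2) =: \beta_p .
\]
On the same subspace, $e^{t\mu\Delta_\Def}$ obeys the bound of Proposition~\ref{prop:semigroup}, with rate $\mu\lambda_\Def^{(p)}(a)$. We first check that $e^{t\mu\Delta_\Def}$ preserves divergence-free fields, or more simply that $-\mu A$ generates a semigroup on $\PP L^p$. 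This follows from the bounded-perturbation theorem once we know that $\PP\Delta_\Def$ restricted to $\PP L^p$ equals $\Delta_\Def + B$ there; that identity is exactly the algebra in Lemma~\ref{lem:commutator}.

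The key step is the $L^p\to L^p$ estimate with the shifted rate. We use the Dyson--Phillips (Duhamel) series
\[
e^{-t\mu A} = \sum_{k\ge 0} S_k(t), \qquad
S_{k+1}(t) = \int_0^t e^{(t-s)\mu\Delta_\Def}\,\mu B\,S_k(s)\,ds ,
\]
with $S_0(t)=e^{t\mu\Delta_\Def}$. Taking $p=q$ in Proposition~\ref{prop:semigroup} gives $\|S_0(t)\|\le C e^{-\mu\lambda t}$, where $\lambda=\lambda_\Def^{(p)}(a)$. Induction then gives
\[
\|S_k(t)\| \le C^{k+1}\frac{(\mu\beta_p t)^k}{k!}\,e^{-\mu\lambda t},
\]
and summing yields the rate $\lambda - C\beta_p$. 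The obstacle is the constant $C\ge 1$, which spoils the claimed rate $\lambda-\beta_p$. To avoid it, I would first try to show that $e^{t\Delta_\Def}$ is an $L^p$ contraction up to the factor $e^{-\lambda t}$, using the pointwise domination $|e^{t(\Delta_B+V)}f|\le e^{t\Delta_{\mathrm{scalar}}}|f|$ from the proof of Proposition~\ref{prop:semigroup}. For $p=q$ this is a sub-Markovian bound with constant $1$ and rate $\lambda_0^{(p)}$, provided the scalar $L^p$ bound is taken in the form $\|e^{t\Delta_{\mathrm{scalar}}}\|_{p\to p}\le e^{-\lambda_0^{(p)}t}$; this holds at least by Riesz--Thorin interpolation between the $L^2$ spectral bound and $L^1/L^\infty$ contractivity, which is the sense in which $\lambda_0^{(p)}$ is used. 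With $C=1$ we get
\[
\|e^{-t\mu A}\|_{L^p\to L^p}\le e^{-\mu(\lambda-\beta_p)t},
\]
and the pinching condition~\eqref{eq:pinching} makes this rate positive. For $p=3$ this uses $\lambda_\Def^{(3)}\ge 26a^2/9$, so positivity requires $26a^2/9 > 2C_3(b^2-a^2)$, which is exactly~\eqref{eq:pinching}.

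For the smoothing part with $q>p$, we split $e^{-t\mu A}=e^{-(t/2)\mu A}e^{-(t/2)\mu A}$. This reduces matters to a short-time $L^p\to L^q$ estimate for $0<t\le 1$, which is enough because for large $t$ we can factor through $L^p\to L^p$ decay and a unit-time smoothing step. For short times we use the Duhamel formula once more:
\[
e^{-t\mu A}f = e^{t\mu\Delta_\Def}f + \int_0^t e^{(t-s)\mu\Delta_\Def}\mu B\,e^{-s\mu A}f\,ds .
\]
The first term has the smoothing bound of Proposition~\ref{prop:semigroup}. For the integral, we place the $L^p\to L^q$ smoothing on $e^{(t-s)\mu\Delta_\Def}$, which costs $(t-s)^{-\frac32(\frac1p-\frac1q)}$. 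This is integrable when $\frac32(\frac1p-\frac1q)<1$. Otherwise we iterate through intermediate exponents, using that $B$ is bounded on each $L^r$ with constant $2C_r(b^2-a^2)$, which requires bounded geometry for the $L^r$ boundedness of $\PP$. The integral is therefore $O(t^{1-\frac32(\frac1p-\frac1q)})$, which is lower order as $t\to0$. Combining the pieces gives the stated estimate with some constant $C'$ and rate $\mu\lambda_A^{(p)}$, where $\lambda_A^{(p)}\ge\lambda_\Def^{(p)}(a)-2C_p(b^2-a^2)$.
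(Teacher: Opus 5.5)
Your strategy is the paper's: on divergence-free fields $-\mu A=\mu\Delta_\Def+\mu B$ with $B=2[\PP,\Ric]$, and the Stokes semigroup should inherit Proposition~\ref{prop:semigroup} with the gap shifted by $\|B\|$. You also correctly see that getting exactly $\lambda-\beta_p$ needs a constant-one bound for $e^{t\Delta_\Def}$, a point the paper's one-line justification leaves implicit.

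\textbf{The gap.} The constant $\beta_p=2C_p(b^2-a^2)$ of Lemma~\ref{lem:commutator} holds only on divergence-free fields, since its proof uses both $\PP\omega=\omega$ and $(I-\PP)(c\,\omega)=0$. But $e^{t\mu\Delta_\Def}$ does \emph{not} preserve divergence-free fields. If it did, differentiating at $t=0$ would give $(I-\PP)\Delta_\Def\omega=0$, i.e.\ $B\omega=[\PP,\Delta_\Def]\omega=-(I-\PP)\Delta_\Def\omega=0$ for every divergence-free $\omega$. So the invariance you propose to check is false exactly when there is something to prove. Nor can the bounded-perturbation theorem be applied on $\PP L^p$, because $\Delta_\Def$ does not map that space into itself.

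Concretely, in your Dyson--Phillips recursion $S_0(s)f=e^{s\mu\Delta_\Def}f$ and all later iterates leave $\PP L^p$. The step $\|BS_k(s)f\|\le\beta_p\|S_k(s)f\|$ is therefore unjustified. On all of $L^p$ the natural bound comes from $[\PP,R']=R'(I-\PP)-(I-\PP)R'$ with $R'=\Ric+(a^2+b^2)I$, giving $\|B\|\le 4C_p(b^2-a^2)$. That yields only the rate $\lambda-4C_p(b^2-a^2)$ and halves the margin in~\eqref{eq:pinching}. The paper's one-line argument glosses over the same point.

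\textbf{How to recover the sharp rate.} Estimate the fixed point instead of the iterates. Realise the Stokes semigroup on all of $L^p$ through $\tilde L=\PP\Delta_\Def\PP+(I-\PP)\Delta_\Def(I-\PP)$. This operator differs from $\Delta_\Def$ by the bounded operator $-2\PP[\PP,\Ric]+2(I-\PP)[\PP,\Ric]$, commutes with $\PP$, and equals $-A$ on $\PP L^p$. For $f\in\PP L^p$, the orbit $u(t)=e^{-t\mu A}f$ then satisfies $u(t)=e^{t\mu\Delta_\Def}f+\int_0^t e^{(t-s)\mu\Delta_\Def}\mu Bu(s)\,ds$, where $B$ now acts only on the divergence-free $u(s)$. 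Gronwall applied to $e^{\mu\lambda t}\|u(t)\|_{L^p}$, with a constant-one bound for $e^{t\Delta_\Def}$, gives exactly $\|u(t)\|_{L^p}\le e^{-\mu(\lambda-\beta_p)t}\|f\|_{L^p}$.

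\textbf{Two smaller points.}
\begin{enumerate}
\item Riesz--Thorin between $L^2$ and $L^\infty$ yields only the rate $\frac{2}{p}\lambda_0^{(2)}$. At $p=3$ that is $\frac23a^2$, hence $\lambda_\Def^{(3)}\ge 24a^2/9$, not the $26a^2/9$ you later invoke. The constant-one bound with rate $\frac{4(p-1)}{p^2}\lambda_0^{(2)}$ comes instead from $\frac{d}{dt}\|u\|_{L^p}^p=-\frac{4(p-1)}{p}\|\nabla|u|^{p/2}\|_{L^2}^2$ together with the $L^2$ gap applied to $|u|^{p/2}$.
\item For $t\ge 1$, factoring through $L^p\to L^p$ decay and a unit-time smoothing step gives only $Ce^{-\mu\lambda_A^{(p)}t}$, without the factor $t^{-\frac32(\frac1p-\frac1q)}$. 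The stated large-time form therefore needs either an $\epsilon$-loss in the rate or a genuine large-time smoothing estimate.
\end{enumerate}
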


\section{The bilinear estimate}
\label{sec:bilinear}

The bilinear (Oseen-Stokes) estimate requires bounding the composite
operator $T_\tau = e^{-\tau\mu A}\PP\nabla\cdot : L^r \to L^q$.

\begin{proposition}\label{prop:bilinear}
Under the pinching condition~\eqref{eq:pinching}, for
$1 < r \leq q < \infty$ and $\tau > 0$:
\begin{equation}\label{eq:bilinear}
\|T_\tau F\|_{L^q} \leq C''\,
\tau^{-\frac{1}{2}-\frac{3}{2}(\frac{1}{r}-\frac{1}{q})}\,
e^{-\mu\gamma'\tau}\,\|F\|_{L^r},
\end{equation}
with {$\gamma' = \lambda_B^{(r)} + 2a^2 \geq 2a^2 > 0$} depending on the curvature bounds. The temporal
singularity exponent $\delta = 1/2 + 3/(2q)$ is identical to the
$\HH^3$ case.
\end{proposition}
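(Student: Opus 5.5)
The plan is to factor $T_\tau$ at the half-time,
\[
T_\tau = e^{-\frac{\tau}{2}\mu A}\,\bigl(e^{-\frac{\tau}{2}\mu A}\PP\nabla\cdot\bigr),
\]
and bound the two factors separately. For the outer factor $e^{-\frac{\tau}{2}\mu A}:L^r\to L^q$ I will invoke Proposition~\ref{prop:stokes} directly. This supplies the spatial smoothing factor $\tau^{-\frac32(\frac1r-\frac1q)}$ together with an exponential weight. The inner factor $e^{-\frac{\tau}{2}\mu A}\PP\nabla\cdot:L^r\to L^r$ must then be shown to cost $\tau^{-1/2}$ with exponential decay. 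Multiplying the two bounds gives \eqref{eq:bilinear}. The exponent $\frac12+\frac32(\frac1r-\frac1q)$ arises exactly as on $\HH^3$, since only short-time local scaling enters.

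For the inner factor I will argue by duality. Its adjoint is $\nabla\,\PP\,e^{-\frac{\tau}{2}\mu A^*}$, where $A^*=-\Delta_\Def-2[\Ric,\PP]$ on the dual space. This adjoint differs from $A$ only by a bounded zeroth-order term, controlled by Lemma~\ref{lem:commutator} with $C_{r'}=C_r$. The task therefore reduces to a gradient estimate
\[
\|\nabla e^{-s\mu A}g\|_{L^{r'}}\lesssim s^{-1/2}e^{-c s}\|g\|_{L^{r'}}.
\]
I will derive this in two regimes. For $s\le 1$ I will use the Duhamel formula, treating $2[\PP,\Ric]$ as a bounded perturbation of $e^{s\mu\Delta_\Def}$. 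Combined with the Bochner gradient bound for $e^{s\Delta_\Def}$, valid under bounded geometry with $|\nabla^k \mathrm{Rm}|\le C_k$ and positive injectivity radius (local Li–Yau/Riesz-transform type estimates, as in Pierfelice~\cite{Pier17}), this gives $\|\nabla e^{s\mu A}\|\lesssim s^{-1/2}$. For $s\ge1$ I will write $e^{-s\mu A}=e^{-\frac12\mu A}e^{-(s-\frac12)\mu A}$, so that the exponential decay comes from Proposition~\ref{prop:semigroup} and Proposition~\ref{prop:stokes} while the gradient falls on a unit-time smoothing factor.

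I expect the exponential rate to be the main obstacle. The statement claims $\gamma'=\lambda_B^{(r)}+2a^2$, but the cheap route just described only yields $\lambda_A^{(r)}$, with the commutator loss $2C_r(b^2-a^2)$ built in. To recover $\gamma'$, I will first try pulling $\PP\nabla\cdot$ through via the diamagnetic/Trotter domination of Proposition~\ref{prop:semigroup}. The idea is to dominate $|e^{-\tau\mu A}\PP\nabla\cdot F|$ using the decomposition \eqref{eq:decomp}, with the factor $e^{-2a^2\tau}$ extracted explicitly. The remaining decay would then come from the Bochner spectral bottom $\lambda_B^{(r)}$ acting on the divergence term. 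The commutator correction would be absorbed into the constant $C''$ on bounded time intervals, with the pinching condition \eqref{eq:pinching} ensuring it does not destroy positivity for large $\tau$.

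If this sharper route fails, I will settle for $\gamma'=\lambda_A^{(r)}$. That rate is still positive under \eqref{eq:pinching} and suffices for the fixed-point argument.
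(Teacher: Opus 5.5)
Your fallback is sound, but it does not prove the proposition as stated, and the route you sketch for the stated rate would fail.

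\textbf{What your fallback proves.} The half-time split and the duality reduction are fine. Indeed, extending $A$ to all fields as $-\Delta_\Def-2[\PP,\Ric]$, one has $\PP e^{-s\mu A^*}=e^{-s\mu A}\PP$, so your adjoint is exactly the paper's $T^*=-\nabla e^{-s\mu A}\PP$. Add a short-time gradient bound for $e^{s\mu\Delta_\Def}$, the commutator $2[\PP,\Ric]$ treated as a bounded Duhamel perturbation, and the semigroup property for $s\ge 1$. Together these give \eqref{eq:bilinear} with the correct singular exponent, but with $\gamma'$ replaced by $\lambda_A^{(r)}$.

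\textbf{Why the sharper route fails.} It cannot recover $\lambda_B^{(r)}+2a^2$. The Trotter/diamagnetic argument of Proposition~\ref{prop:semigroup} works only because $V$ is a \emph{pointwise} negative semi-definite endomorphism, so that $e^{\tau V}$ is a pointwise contraction. The Stokes generator $\mu\Delta_\Def+2\mu[\PP,\Ric]$ contains the nonlocal term $[\PP,\Ric]$, for which no pointwise contraction exists. Moreover, $\PP\nabla\cdot F$ is a nonlocal first-order expression in $F$, so $\bigl|e^{-\tau\mu A}\PP\nabla\cdot F\bigr|$ cannot be dominated by a scalar heat semigroup acting on $|F|$. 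Such a bound would even erase the $\tau^{-1/2}$ factor, which scaling forces. Finally, absorbing the commutator into $C''$ on bounded intervals while pinching ``keeps positivity for large $\tau$'' already concedes that the large-$\tau$ rate is lowered, i.e.\ it is not $\gamma'$.

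\textbf{The paper's route, and its limitation.} The paper aims at $\gamma'$ differently. With $\tilde\Delta_B=\Delta_B-2a^2$, it writes $T_\tau^*=[-\nabla(-\tilde\Delta_B)^{-1/2}]\circ[(-\tilde\Delta_B)^{1/2}e^{-\tau\mu A}\PP]$. It treats the first factor as a shifted bundle Riesz transform. It Duhamel-expands the second around $e^{\tau\mu\tilde\Delta_B}$ with the bounded perturbation $W_1=V+2[\PP,\Ric]$, so the principal term decays at $\lambda_B^{(r)}+2a^2$.

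However, the paper's remainder $\int_0^\tau(-\tilde\Delta_B)^{1/2}e^{(\tau-s)\mu\tilde\Delta_B}\mu W_1e^{-s\mu A}\PP\,ds$ still contains $e^{-s\mu A}$, which is controlled only at the Stokes rate. Convolving $e^{-\mu\gamma'(\tau-s)}$ against $e^{-\mu\lambda_A^{(r)}s}$ gives at best $e^{-\mu\lambda_A^{(r)}\tau}$. The paper itself concedes that ``the correction modifies the exponential decay rate''. So the rate either argument actually secures for the full operator is $\lambda_A^{(r)}$.

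\textbf{What $\gamma'$ would require.} Obtaining $\gamma'$ would need decay at rate at least $\gamma'$ for $e^{-s\mu A}$ on $L^r_\sigma$ itself. Neither you nor the paper supplies this for $r=3$. For $r=2$ it does follow from the quadratic form $\langle -\Delta_\Def u,u\rangle\ge(\lambda_B^{(2)}+2a^2)\|u\|_{L^2}^2$ on divergence-free $u$.

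\textbf{What your fallback buys.} It suffices downstream, since Section~\ref{sec:contraction} uses only $\gamma'\ge\lambda_A^{(3)}$, which then holds with equality ($\alpha-\mu\gamma'=0$). It also has a real advantage over the paper's route. It needs only a local short-time gradient estimate under bounded geometry, rather than global $L^{r'}$-boundedness of a Riesz transform on 1-forms, which the paper justifies only by extrapolating Bakry's scalar result.
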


\begin{proof}
By duality, $\|T_\tau F\|_{L^q} = \sup_{\|\Phi\|_{L^{q'}}=1}
|\langle T_\tau F, \Phi\rangle|$. The adjoint is
$T_\tau^* = -\nabla e^{-\tau\mu A}\PP$. {To preserve the $+2a^2$ spectral shift generated by the Ricci curvature, we factor $T_\tau^*$ through the shifted Bochner Laplacian $\tilde{\Delta}_B = \Delta_B - 2a^2 I$:
\begin{equation}
T_\tau^* = \bigl[-\nabla(-\tilde{\Delta}_B)^{-1/2}\bigr]\circ
\bigl[(-\tilde{\Delta}_B)^{1/2}e^{-\tau\mu A}\PP\bigr].
\end{equation}
The first factor (the shifted bundle Riesz transform) is unconditionally bounded on $L^{r'}$ because $-\tilde{\Delta}_B \geq 2a^2 I > 0$. It can be viewed as the standard bundle Riesz transform $-\nabla(-\Delta_B)^{-1/2}$ composed with the multiplier $(-\Delta_B)^{1/2}(-\tilde{\Delta}_B)^{-1/2}$, which is bounded via standard $H^\infty$-functional calculus. This rigorously extends the pseudo-differential operator bounds established} by Bakry~\cite{Bak87} on any complete manifold with
$\Ric \geq -2b^2${. This extension from scalars to 1-forms relies canonically on the bounded geometry assumption, specifically the uniform injectivity radius lower bound, to prevent local volume collapse and ensure uniform short-time heat kernel behaviour.}

For the second factor, write {$-A = \tilde{\Delta}_B + W_1$, where
$W_1 = V + 2[\PP, \Ric]$ is a bounded zeroth-order operator (since $V = \Ric + 2a^2 g \leq 0$ and $2[\PP, \Ric]$ is bounded)}. By
Duhamel {expansion around the shifted semigroup}:
\begin{equation}
{(-\tilde{\Delta}_B)^{1/2}e^{-\tau\mu A}\PP =
(-\tilde{\Delta}_B)^{1/2}e^{\tau\mu\tilde{\Delta}_B}\PP
+ \int_0^\tau (-\tilde{\Delta}_B)^{1/2}e^{(\tau-s)\mu\tilde{\Delta}_B}\,
\mu W_1\,e^{-s\mu A}\PP\,ds.}
\end{equation}
The principal term gives singularity $\tau^{-1/2-\gamma}$ with
$\gamma = \frac{3}{2}(1/q' - 1/r') = \frac{3}{2}(1/r - 1/q)$, by
the time-halving trick and the scalar semigroup bounds. {Crucially, because we expanded around $\tilde{\Delta}_B$, the principal term now decays exponentially at the accelerated rate $\gamma' = \lambda_B^{(r)} + 2a^2$.} For the
Duhamel correction, the integrand has temporal structure
$(\tau - s)^{-1/2}s^{-\gamma}$, giving
$\int_0^\tau(\tau-s)^{-1/2}s^{-\gamma}ds =
\tau^{1/2-\gamma}\mathcal{B}(1/2, 1-\gamma)$.

For the critical case $p = 3$, $q = 6$, $r = 3$:
$\gamma = 1/4$, so the principal term scales as $\tau^{-3/4}$ and
the correction scales as $\tau^{1/4}$. The ratio is $\tau$, which
vanishes as $\tau \to 0$. The correction modifies the exponential
decay rate but does not alter the temporal singularity exponent
$\delta = 1/2 + 3/(2q) = 3/4$.
\end{proof}

\section{The Fujita-Kato contraction and the main theorems}
\label{sec:contraction}

\subsection{The function space and bounds}

As in~\cite{WB-JFA1}, the mild solution is sought in
$X = \{u \in C((0,\infty); L^6_\sigma) : \|u\|_X < \infty\}$
with norm $\|u\|_X = \sup_{t>0}e^{\alpha t}t^{1/4}\|u(t)\|_{L^6}$,
where {$\alpha = \mu\lambda_A^{(3)}$}.

{The linear bound follows from Proposition~\ref{prop:stokes} with $p=3$ and $q=6$, which decays at rate $\mu\lambda_A^{(3)}$:
$\|e^{-t\mu A}u_0\|_X = \sup_{t>0} e^{\mu\lambda_A^{(3)} t}\,t^{1/4}\|e^{-t\mu A}u_0\|_{L^6} \leq \sup_{t>0} e^{\mu\lambda_A^{(3)} t}\,t^{1/4}\cdot C_1\,t^{-1/4}e^{-\mu\lambda_A^{(3)} t}\|u_0\|_{L^3} = C_1\|u_0\|_{L^3}$.}
The bilinear bound follows from Proposition~\ref{prop:bilinear}:
$\|B(u,v)\|_X \leq C_2\|u\|_X\|v\|_X$,
where $C_2 = C\,\mathcal{B}(1/4, 1/2)$.

\subsection{The scaling integral {and uniform exponential decay}}

{To properly bound the bilinear term $\|B(u,v)\|_X$, we must multiply the temporal integral by the norm's weight $e^{\alpha t} t^{1/4}$ and track the combined exponential factors. Using $\|u(s) \otimes v(s)\|_{L^3} \leq \|u(s)\|_{L^6}\|v(s)\|_{L^6} \leq s^{-1/2} e^{-2\alpha s} \|u\|_X\|v\|_X$, we isolate the exponential terms inside the temporal integral and evaluate:
\begin{equation}
\int_0^t e^{\alpha t} e^{-\mu\gamma'(t-s)} e^{-2\alpha s} (\dots) \, ds = \int_0^t {e^{(\alpha - \mu\gamma')(t-s)} e^{-\alpha s}} (\dots) \, ds.
\end{equation}
The combined exponential argument can be rewritten algebraically as:
\begin{equation}
\alpha t - \mu\gamma'(t-s) - 2\alpha s = (\alpha - \mu\gamma')(t-s) - \alpha s.
\end{equation}
{Thanks to our factorization through the shifted Bochner Laplacian in Proposition~\ref{prop:bilinear}, the exponential decay rate for $r=3$ satisfies $\gamma' = \lambda_B^{(3)} + 2a^2 \geq \frac{26}{9}a^2 \geq \lambda_A^{(3)}$. Since $\alpha = \mu\lambda_A^{(3)}$, the first factor $\alpha - \mu\gamma' = \mu(\lambda_A^{(3)} - \gamma') \leq 0$. The second factor $-\alpha s \leq 0$. So the total exponent is $\leq 0$ for all $s \in [0, t]$.} The exponential growth is completely eliminated, allowing us to factor the uniform bound of $1$ out of the integral.}

The {remaining} temporal integral in the bilinear bound reduces (by the
substitution $s = \sigma t$) to a Beta function:
\begin{equation}
t^{1/4}\int_0^t(t-s)^{-3/4}s^{-1/2}\,ds
= t^{1/4}\cdot t^{-1/4}\int_0^1(1-\sigma)^{-3/4}\sigma^{-1/2}\,
d\sigma = \mathcal{B}(1/4, 1/2).
\end{equation}
The temporal powers cancel exactly ($t^{1/4}\cdot t^{-1/4} = 1$),
and the result is a geometry-independent constant. This confirms
that the scaling exponent $1/2 - 3/(2p)$ is unchanged from the
$\HH^3$ case: the contraction closes for $L^3$ data and fails for
$L^p$ with $p < 3$, regardless of the curvature.

\subsection{Proof of Theorem~\ref{thm:main}}

The map $u \mapsto e^{-t\mu A}u_0 + B(u,u)$ is a contraction on the
ball $\{\|u\|_X \leq 2C_1\|u_0\|_{L^3}\}$ provided
$4C_1C_2\|u_0\|_{L^3} < 1$, i.e.,
$\|u_0\|_{L^3} < \epsilon_0 = (4C_1C_2)^{-1}$. The unique fixed
point satisfies $\|u(t)\|_{L^6} \leq
2C_1\|u_0\|_{L^3}\,t^{-1/4}{e^{-\mu\lambda_A^{(3)}t}}$,
giving~\eqref{eq:decay}. \qed

\subsection{Proof of Theorem~\ref{thm:UV}}

For $L^p$ data with $p < 3$, the failure of the contraction mapping is driven fundamentally by an unresolvable temporal scaling mismatch, rather than a lack of local integrability. Suppose we attempt to construct a solution in an intermediate space with norm $\|u\|_{X} = \sup_{t>0} t^{\gamma}\|u(t)\|_{L^q}$, where $\gamma = \frac{3}{2}(\frac{1}{p} - \frac{1}{q})$. The linear heat evolution satisfies $\|e^{-t\mu A}u_0\|_{X} \sim O(1)$ as $t \to 0$.

For the bilinear term, the operator $T_{t-s} : L^{q/2} \to L^q$ introduces a singularity $(t-s)^{-\frac{1}{2} - \frac{3}{2q}}$. The Duhamel integral scales as:
\begin{equation}
\int_0^t \|T_{t-s} (u(s) \otimes u(s))\|_{L^q} \,ds 
\leq C \int_0^t (t-s)^{-\frac{1}{2}-\frac{3}{2q}} \|u(s)\|_{L^q}^2 \,ds 
\leq C \|u\|_X^2 \int_0^t (t-s)^{-\frac{1}{2}-\frac{3}{2q}} s^{-2\gamma} \,ds.
\end{equation}
Substituting $s = \sigma t$, the temporal integral evaluates to:
\begin{equation}
t^{1 - (\frac{1}{2} + \frac{3}{2q}) - 2\gamma} \int_0^1 (1-\sigma)^{-\frac{1}{2}-\frac{3}{2q}} \sigma^{-2\gamma} \,d\sigma.
\end{equation}
By selecting an appropriate intermediate space $q$ (for example, $q=4$ for $L^2$ data), both exponents in the $\sigma$-integrand become strictly greater than $-1$, and the integral converges to a perfectly finite Beta function. 

However, the overall temporal scaling of the bilinear term relative to the linear scaling $t^{-\gamma}$ is determined by the remaining powers of $t$:
\begin{equation}
\left( 1 - \frac{1}{2} - \frac{3}{2q} - 2\gamma \right) - (-\gamma) = \frac{1}{2} - \frac{3}{2q} - \frac{3}{2}\left(\frac{1}{p} - \frac{1}{q}\right) = \frac{1}{2} - \frac{3}{2p}.
\end{equation}
Because $p < 3$, this relative scaling exponent is strictly negative. As $t \to 0$, the ratio of the nonlinear term to the linear term diverges as $t^{1/2 - 3/(2p)} \to \infty$. The nonlinear term fundamentally overwhelms the linear profile at short times, making it impossible to close the bound $\|B(u,u)\|_X \leq C\|u\|_X^2$ globally. The spectral gap contributes only to the exponential factor, which is $O(1)$ as $t \to 0$ and cannot resolve this local ultraviolet scaling obstruction.
 \qed

\section{The spectral gap}
\label{sec:gap}

The spectral gap of the deformation Laplacian on a general manifold
with $-b^2 \leq K \leq -a^2 < 0$ is bounded below by combining
three ingredients.

\emph{McKean's theorem~\cite{McKean70}.} On a complete simply
connected manifold with $K \leq -a^2$, the scalar $L^2$ spectral
bottom satisfies $\lambda_0^{(2)} \geq a^2$ (for $n = 3$). {This strictly requires the simply connected hypothesis. For non-simply connected complete manifolds (such as compact quotients), the constant function $1 \in L^2(M)$, meaning the scalar spectral gap vanishes ($\lambda_0^{(2)} = 0$).}

\emph{The diamagnetic inequality~\cite{HSU1977}.} The Bochner $L^p$
spectral bottom satisfies
$\lambda_B^{(p)} \geq \lambda_0^{(p)}$.

\emph{The Weitzenb\"ock shift.} On a 3-manifold with
$\Ric \leq -2a^2 g$:
$\lambda_\Def^{(p)} \geq \lambda_B^{(p)} + 2a^2 \geq
\lambda_0^{(p)} + 2a^2$.

Using the Davies $L^p$ spectral-gap formula~\cite{DaviesBook}
(valid on manifolds whose heat kernel achieves the spectral bottom,
which includes all manifolds with pinched negative curvature by
Sullivan~\cite{Sullivan87})
$\lambda_0^{(p)} = \frac{4(p-1)}{p^2}\lambda_0^{(2)}$:
\begin{equation}
\lambda_\Def^{(p)}(a) \geq \frac{4(p-1)}{p^2}\,a^2 + 2a^2.
\end{equation}
For $p = 2$: $\lambda_\Def^{(2)} \geq 3a^2$. For $p = 3$:
$\lambda_\Def^{(3)} \geq 26a^2/9$.

The Stokes spectral gap is
$\lambda_A^{(p)} \geq \lambda_\Def^{(p)}(a) - 2C_p(b^2 - a^2)$.
For $p = 3$: $\lambda_A^{(3)} \geq 26a^2/9 - 2C_3(b^2 - a^2)$,
which is positive under the pinching
condition~\eqref{eq:pinching}.

\section{Discussion}
\label{sec:discussion}

\subsection{The role of curvature pinching}

The pinching condition~\eqref{eq:pinching} arises solely from the
Leray-projector commutator (Lemma~\ref{lem:commutator}), not from
the semigroup bounds (Proposition~\ref{prop:semigroup}). The
deformation semigroup is unconditionally well-behaved for all
pinching ratios $b/a$: more negative curvature enhances dissipation
(because $V = \Ric + 2a^2 g \leq 0$). The restriction on $b/a$
reflects the cost of the Leray projection on a manifold with
spatially varying curvature. On an Einstein manifold ($b = a$,
including all space forms), the restriction is vacuous.

\subsection{Comparison with the $\HH^3$ result}

On $\HH^3$ ($a = b = 1$): $\lambda_A^{(3)} = 26/9$, the commutator
vanishes, the pinching condition is vacuous, and the decay rate is
the exact spectral gap. On a general manifold: the decay rate is
reduced by $2C_3(b^2 - a^2)$, reflecting the ``cost'' of the Leray
projector failing to commute with the viscous operator.

The smallness threshold $\epsilon_0$ depends on the curvature bounds
through the constants $C_1$ and $C_2$, which involve $C_p$ (the
Helmholtz projector norm) and the Duhamel correction from the
bilinear estimate. On $\HH^3$, $\epsilon_0 = (4C_1C_2)^{-1}$ with
the exact Beta-function constant; on a general manifold, $\epsilon_0$
is smaller by a factor depending on $b/a$.

\subsection{The UV/IR boundary}

The UV obstruction (Theorem~\ref{thm:UV}) is identical on all
manifolds: the scaling exponent $1/2 - 3/(2p)$ depends only on the
local heat-kernel asymptotics, which are universally Euclidean. The
spectral gap (an IR quantity) cannot cure the UV singularity. This
confirms the sharp separation between what global geometry can
improve (large-scale, long-time behaviour) and what it cannot
(small-scale, short-time behaviour) established in~\cite{WB-JFA1}.

{
\subsection{Generalization to non-simply connected manifolds}
As observed in Section~\ref{sec:gap}, the assumption that $M^3$ is simply connected is strictly required to invoke McKean's theorem for the scalar spectral gap. On general complete manifolds or compact quotients, the scalar gap may vanish ($\lambda_0^{(2)} = 0$). In this scenario, the diamagnetic inequality gives $\lambda_B^{(3)} \geq 0$, but the Weitzenb\"ock shift ensures the deformation Laplacian still maintains a strict positive gap: $\lambda_\Def^{(3)} \geq 2a^2$. The Stokes spectral gap becomes:
\begin{equation}
\lambda_A^{(3)} \geq 2a^2 - 2C_3(b^2 - a^2).
\end{equation}
Consequently, the main theorem can be extended to all complete or compact manifolds by dropping the scalar contribution and requiring the slightly stricter curvature pinching condition $b^2/a^2 < 1 + 1/C_3$ to ensure $\lambda_A^{(3)} > 0$. This highlights a striking geometric feature: the dissipation generated solely by the negative Ricci curvature shift ($V \leq 0$) provides enough regularisation to yield exponential stability, even on compact domains where scalar functions cannot decay.

\begin{corollary}\label{cor:non-sc}
Theorem~\ref{thm:main} extends to complete (not necessarily simply
connected) 3-manifolds with $-b^2 \leq K \leq -a^2 < 0$, bounded
geometry, and strictly positive injectivity radius, under the
tighter pinching condition $b^2/a^2 < 1 + 1/C_3$, with spectral
gap $\lambda_A^{(3)} \geq 2a^2 - 2C_3(b^2 - a^2) > 0$.
\end{corollary}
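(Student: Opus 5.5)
The plan is to rerun the proof of Theorem~\ref{thm:main} step by step, changing only the input from the spectral gap. McKean's theorem is the single place where simple connectivity enters (Section~\ref{sec:gap}), so everything that does not use it stays in force. Proposition~\ref{prop:semigroup} uses only the pointwise sign $V = \Ric + 2a^2 g \le 0$, the Trotter product formula and the Hess--Schrader--Uhlenbrock diamagnetic inequality; none of these needs simple connectivity. With the trivial bound $\lambda_0^{(p)} \ge 0$, i.e.\ the scalar heat semigroup being an $L^p$ contraction with the local $t^{-\frac32(\frac1p-\frac1q)}$ smoothing supplied by bounded geometry, we get $\lambda_\Def^{(3)} \ge 2a^2$. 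Lemma~\ref{lem:commutator} is purely algebraic and uses only the Ricci pinching and boundedness of $I-\PP$ on $L^3$, so Proposition~\ref{prop:stokes} gives $\lambda_A^{(3)} \ge 2a^2 - 2C_3(b^2-a^2)$. This is positive exactly when $b^2/a^2 < 1 + 1/C_3$, which is the stated pinching condition.

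Next I will check the bilinear estimate of Proposition~\ref{prop:bilinear}. The Duhamel expansion around $\tilde\Delta_B = \Delta_B - 2a^2$ and the shifted Riesz transform need only $-\tilde\Delta_B \ge 2a^2 > 0$, which holds regardless of topology. They also need the bounded-geometry and injectivity-radius hypotheses, which the corollary retains. The resulting decay rate is $\gamma' = \lambda_B^{(3)} + 2a^2 \ge 2a^2$.

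The exponent bookkeeping of the contraction also has to be rechecked. It needs $\gamma' \ge \lambda_A^{(3)}$, and the earlier argument obtained this from $\gamma' \ge 26a^2/9$. Here I will use instead $\gamma' \ge 2a^2 \ge 2a^2 - 2C_3(b^2-a^2)$. If $\lambda_A^{(3)}$ is only known through its lower bound, I will take $\alpha = \mu\bigl(2a^2 - 2C_3(b^2-a^2)\bigr)$, which is at most $\mu\gamma'$. With that choice the exponent $(\alpha-\mu\gamma')(t-s) - \alpha s$ is nonpositive. The Beta-function computation $\mathcal{B}(1/4,1/2)$ is unchanged, and the fixed-point argument closes with $\epsilon_0 = (4C_1C_2)^{-1}$ exactly as before.

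The main obstacle is the $L^p$--$L^q$ smoothing on a non-simply-connected, possibly compact, $M$, together with the boundedness of $I-\PP$, i.e.\ finiteness of $C_3$. If $M$ is compact, $\PP = I - d(-\Delta_{\mathrm{scalar}})^{-1}\delta$ requires inverting $\Delta_{\mathrm{scalar}}$ on mean-zero functions, and harmonic 1-forms must be assigned to the divergence-free part. The small-time smoothing $t^{-1/4}$ from $L^3$ to $L^6$ follows from local heat-kernel bounds under bounded geometry. For large times, the $t^{-1/4}$ factor can be absorbed, since on $t \ge 1$ one uses the $L^6 \to L^6$ exponential bound after a unit-time smoothing step. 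I will first try to derive both facts from bounded geometry alone, using the Sobolev embedding on a uniformly controlled atlas and elliptic $L^p$ theory for the Hodge decomposition. Failing that, I will state $C_3 < \infty$ as implicit in the hypothesis, as the pinching condition already does by invoking $C_3$.
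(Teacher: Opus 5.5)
Your main line is the paper's argument. Proposition~\ref{prop:semigroup} and Lemma~\ref{lem:commutator} never use simple connectivity, so replacing McKean's bound by $\lambda_0^{(p)}\ge 0$ gives $\lambda_\Def^{(3)}\ge 2a^2$ and $\lambda_A^{(3)}\ge 2a^2-2C_3(b^2-a^2)$, which is positive exactly when $b^2/a^2<1+1/C_3$. Fixing $\alpha=\mu\bigl(2a^2-2C_3(b^2-a^2)\bigr)$ is more careful than the paper. The paper's step $\tfrac{26}{9}a^2\ge\lambda_A^{(3)}$, and its analogue $2a^2\ge\lambda_A^{(3)}$ here, would need an upper bound on the Stokes gap that is never established.

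The step that fails as written is your large-time patch. The patch is genuinely needed: once $\lambda_0^{(p)}=0$ (e.g.\ $M$ compact), the scalar bound $\|e^{t\Delta_{\mathrm{scalar}}}\|_{L^3\to L^6}\le Ct^{-1/4}$ is false for large $t$, since constants are preserved. The paper's own argument passes over this point.

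Two things go wrong in the patch.
\begin{itemize}
\item Smoothing for unit time and then applying the $L^6\to L^6$ Stokes bound brings in the rate $2a^2-2C_6(b^2-a^2)$. Interpolating between $L^2$, where $\|I-\PP\|=1$, and $L^6$ gives $C_3\le C_6^{1/2}\le C_6$, so the hypothesis $b^2/a^2<1+1/C_3$ does not control this rate. Factor instead as $e^{-t\mu A}=e^{-\mu A}e^{-(t-1)\mu A}$: decay in $L^3$ first, then smooth $L^3\to L^6$ last.
\item Even then you only get $\|e^{-t\mu A}u_0\|_{L^6}\le Ce^{-\alpha(t-1)}\|u_0\|_{L^3}$ for $t\ge1$. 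The weight $e^{\alpha t}t^{1/4}$ then leaves an unbounded factor $t^{1/4}$, which cannot be absorbed without a margin below the decay rate. This is not an artefact of the method. On compact $M$, an eigenfunction $\phi$ of $A$ for its lowest eigenvalue $\lambda_1$ satisfies $\|e^{-t\mu A}\phi\|_{L^6}=e^{-\mu\lambda_1 t}\|\phi\|_{L^6}$. So a bound $Ct^{-1/4}e^{-\alpha t}$ forces $\alpha<\mu\lambda_1$ strictly, while your argument only gives $\alpha\le\mu\lambda_1$.
\end{itemize}

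There are two ways to repair the second point.
\begin{itemize}
\item Take $\alpha$ strictly below $\mu\bigl(2a^2-2C_3(b^2-a^2)\bigr)$, with constants depending on the margin.
\item Use the weight $\min(t,1)^{1/4}$ in the norm of $X$, and the kernel $\min(\tau,1)^{-3/4}$ in place of $\tau^{-3/4}$ in Proposition~\ref{prop:bilinear}. The Duhamel integral then stays bounded uniformly in $t$ thanks to the factor $e^{-\alpha s}$. The conclusion reads $\|u(t)\|_{L^6}\le C\|u_0\|_{L^3}\max(t^{-1/4},1)\,e^{-\alpha t}$ rather than~\eqref{eq:decay} verbatim.
\end{itemize}
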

}

\end{document}